\documentclass[11pt]{amsart}

\usepackage{amsmath}
\usepackage{fullpage}
\usepackage{xspace}
\usepackage[psamsfonts]{amssymb}
\usepackage[latin1]{inputenc}
\usepackage{graphicx,color}
\usepackage[curve]{xypic} 
\usepackage{hyperref}
\usepackage{graphicx}

\usepackage{amsmath}%
\usepackage{amsthm}%
\usepackage{amscd}
\usepackage{amsfonts}%
\usepackage{amssymb}%
\usepackage{graphicx}

\usepackage{mathrsfs}

\usepackage{tikz}
\usetikzlibrary{matrix,arrows}

\usepackage{tikz-cd}

\newtheorem{theorem}{Theorem}[section]

\newtheorem{corollary}[theorem]{Corollary}

\theoremstyle{remark}

\numberwithin{equation}{section}

\newcommand{\Z}{\mathbb{Z}}

\newcommand{\Q}{\mathbb{Q}}

\newcommand{\rad}{\mathrm{rad}}

\makeatletter
\@namedef{subjclassname@2020}{%
  \textup{2020} Mathematics Subject Classification}
\makeatother

  \DeclareFontFamily{U}{wncy}{}
    \DeclareFontShape{U}{wncy}{m}{n}{<->wncyr10}{}
    \DeclareSymbolFont{mcy}{U}{wncy}{m}{n}
    \DeclareMathSymbol{\Sha}{\mathord}{mcy}{"58}

\begin{document}
\title[]{An improvement on the largest prime factor of $n^2+1$}

\author{Hector Pasten}
\address{ Departamento de Matem\'aticas,
Pontificia Universidad Cat\'olica de Chile.
Facultad de Matem\'aticas,
4860 Av.\ Vicu\~na Mackenna,
Macul, RM, Chile}
\email[H. Pasten]{hector.pasten@uc.cl}%


\thanks{H.P. was supported by ANID Fondecyt Regular grant 1230507 from Chile.}
\date{\today}
\subjclass[2020]{Primary: 11N32; Secondary: 11J86, 11G18} %
\keywords{Largest prime factor, linear forms in logarithms, Shimura curves}%

\begin{abstract} The study of the largest prime factor in polynomial sequences can be traced back at least to the late 19th century in the work of St\"ormer.  Mahler (1933) and Chowla (1934) proved that the largest prime factor of $n^2+1$ grows at least as fast as $\log_2 n$. In 2023 we improved this to $(\log_2 n)^2/\log_3 n$. In this note we show the lower bound $(\log_2 n)^2/\log_4 n$, and that when this bound is nearly sharp it also holds for many prime factors of $n^2+1$. 
\end{abstract}

\maketitle



\section{Introduction}


The study of the prime factorization of integers of the form $n^2+1$ has a long history. In the late 19th century, St\"ormer \cite{Stormer1, Stormer2} investigated the growth of the largest prime factor of quadratic sequences, in particular, of $n^2+1$. Let us write $P(m)$ for the largest prime factor of a positive integer $m$ and $\log_k$ for the $k$-th iterated logarithm (whenever it is defined). In the early 1930's, Chowla \cite{Chowla} and Mahler \cite{Mahler} independently proved
$$
P(n^2+1) \gg \log_2 n
$$
as $n$ grows. This bound remained the best result until the work of Haristoy \cite{Haristoy} (after techniques of Stewart--Yu \cite{StewartYu} based on linear forms in logarithms) where the estimate
$$
P(n^2+1) \gg \frac{\log_3 n}{\log_4 n}\log_2 n
$$
was established. Note that this still has the form $(\log_2 n)^{1+o(1)}$. In 2023 we used the theory of Shimura curves \cite{PastenShimuraCurves} and linear forms in logarithms to obtain the substantial improvement \cite{Pasten}:
\begin{equation}\label{EqnPasten}
P(n^2+1) \gg \frac{(\log_2 n)^2}{\log_3 n}.
\end{equation}
Our main result is a ``hybrid'' bound from which one can easily deduce an improvement of the previous bound. For this, let $\rad(m)$ be the radical of $m\ne 0$, that is, the largest squarefree divisor of $m$. For a positive integer $n$, let $P_n = P(n^2+1)$ and $R_n=\rad(n^2+1)$.

\begin{theorem}\label{ThmMain}  As $n$ grows, we have
$$
(\log_2 n )^2 \ll \log(R_n)\log_2(P_n). 
$$
\end{theorem}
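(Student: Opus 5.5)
The plan is to combine two independent upper bounds for $\log n$: one from linear forms in logarithms, which is sensitive to the number and size of the primes dividing $n^2+1$, and one from the Shimura curve method of \cite{PastenShimuraCurves, Pasten}, which controls the exponents. Multiplying (or balancing) them should yield $(\log_2 n)^2\ll \log(R_n)\log_2(P_n)$. I have not checked that the two inputs combine to exactly this shape, and the final step below is where I expect the difficulty to lie.

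\textbf{Setup.} Write $n^2+1=\prod_{j=1}^k p_j^{e_j}$ with $p_j\le P_n$, so $k=\omega(n^2+1)$. Every odd prime dividing $n^2+1$ is $\equiv 1 \pmod 4$ and splits in $\Z[i]$. Since $n+i$ and $n-i$ share no odd prime factor, we can write
$$
n+i = \epsilon\,(1+i)^{e_0}\prod_j \pi_j^{e_j},
$$
with $\epsilon$ a unit and $\pi_j \mid p_j$. Then
$$
\frac{n+i}{n-i}=\eta\prod_j(\pi_j/\bar\pi_j)^{e_j}
$$
for a root of unity $\eta$. This number has absolute value $1$ and argument $\approx 2/n$ modulo $\pi/2$. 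So the linear form
$$
\Lambda=\sum_j e_j\log(\pi_j/\bar\pi_j)+\log\eta+2\pi i m
$$
is nonzero and satisfies $|\Lambda|\ll 1/n$.

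\textbf{Step 1: Matveev's bound.} Apply Matveev's theorem to $\Lambda$. It has $k+2$ terms, the heights are $\asymp\log p_j$, and the coefficients are bounded by $E=\max_j e_j\ll\log n$. This gives
$$
\log n\ll C^{k}\Big(\prod_j\log p_j\Big)\log E .
$$
Taking logarithms,
$$
\log_2 n\ll k+k\log_2 P_n+\log_2 E .
$$
Since the $p_j$ are distinct, $\log R_n\ge\sum_j\log p_j\gg k\log k$, so the first two terms are harmless. The real content is then
$$
\log_2 n\ll k\log_2 P_n+\log_2 E .
$$

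\textbf{Step 2: the Shimura curve input for $E$.} Feeding the first-step bound directly gives only $\log_2 n\ll k\log_2P_n$, which is linear rather than quadratic in $\log_2 n$. To gain the extra factor I would invoke the Shimura curve input of \cite{PastenShimuraCurves}, as used in \cite{Pasten}. For the identity $n^2+1=R_n\cdot(\text{powerful part})$ it bounds the height of $n$ in terms of the radical, roughly in the form $\log n\ll R_n^{O(1)}$, which in particular gives $\log E\ll\log R_n$. The key move is to run the linear form argument not with the trivial coefficient bound but with a dichotomy.

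First I would split the primes at a threshold $y$. The primes $p_j\le y$ contribute at most $\pi(y)$ terms. The remaining primes contribute at most $\log R_n/\log y$ terms, each with height factor $\log p_j\le\log P_n$. Next I would use the Shimura bound to choose $y$ as a function of $\log_2 n$, roughly $\log y\asymp\log_2 n/\log_2 P_n$. With this choice the number of terms in the linear form is $\ll\log R_n\,\log_2P_n/\log_2 n$, while the contribution of the small primes is absorbed by the Shimura estimate. Substituting into the bound $\log_2 n\ll k\log_2 P_n$ then gives exactly
$$
\log_2 n\ll\frac{\log R_n\,\log_2 P_n}{\log_2 n}.
$$
Rearranging yields Theorem~\ref{ThmMain}.

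\textbf{Main obstacle.} The hard part is this balancing step: making the Shimura curve estimate absorb the small-prime part of the linear form. Concretely, one needs a version of the bound from \cite{Pasten} that controls the contribution of the $y$-smooth part of $n^2+1$, not only the total. The first thing I would try is to factor $n^2+1=A B$, with $A$ the $y$-smooth part and $B$ the rest. I would then apply the Shimura bound to control $A$, and use linear forms in logarithms only for the primes dividing $B$, whose count is at most $\log R_n/\log y$.
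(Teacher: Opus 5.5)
\textbf{There is a genuine gap.} Your Step 1 is fine. The balancing in Step 2, however, is not justified, and the repair you propose cannot work, because thresholding on the \emph{size of the primes} is the wrong dichotomy.

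Consider the two ways of handling the small primes $p\le y$.
\begin{enumerate}
\item If you keep them as separate terms in the linear form, the number of terms is not reduced below $\omega(n^2+1)$ in general. You only recover the linear bound $\log_2 n\ll \omega(n^2+1)\log_2 P_n$. Nothing in your argument makes the small-prime part ``absorbed''.
\item If you lump the $y$-smooth part $A$ into a single algebraic number, its height is $\asymp \log A=\sum_{p\le y}v_p(n^2+1)\log p$. This is governed by the exponents of the small primes, which the Shimura input only bounds by $R_n^{O(1)}$. So $\log$ of that height can be $\asymp \log R_n$, and again you get only a linear bound.
\end{enumerate}
Also, the form in which you invoke the Shimura curve input ($\log n\ll R_n^{O(1)}$, hence $\log E\ll\log R_n$) only enters through the negligible term $\log_2 E$. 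Used this way, it cannot produce the extra factor.

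\textbf{The missing idea is to split by the size of the exponents.} Factor $n+i=u\prod_j\gamma_j^{e_j}$ in $\Z[i]$. Fix $B>2$, let $I=\{j: e_j>B\}$, and lump all $\gamma_j^{e_j}$ with $j\notin I$ into one Gaussian integer $\gamma_0$. Since those exponents are at most $B$, the number $\xi_0=\bar\gamma_0/\gamma_0$ has height $\ll B\log R_n$, no matter how many primes it contains. The linear form in logarithms then has only $m=1+\#I$ terms and gives
\[
\log_2 n\ll \log B+\log_2 R_n+m\log_2 P_n .
\]
The Shimura curve input is used in the form $\prod_{p\mid n^2+1}v_p(n^2+1)\ll R_n^8$ (Lemma 3.2 of \cite{Pasten}). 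This controls the \emph{number} of primes with large exponent: $(B/2)^{m-1}\ll R_n^8$, so $m\ll 1+\log R_n/\log B$. Choosing $B=\exp\bigl(\sqrt{(\log R_n)\log_2 P_n}\bigr)$ balances $\log B$ against $m\log_2 P_n$ and yields $(\log_2 n)^2\ll (\log R_n)\log_2 P_n$.
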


From here we obtain:

\begin{corollary}\label{CoroP} As $n$ grows, we have
$$
P_n \gg \frac{(\log_2 n)^2}{\log_4 n}.
$$
\end{corollary}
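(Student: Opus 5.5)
We deduce Corollary \ref{CoroP} from Theorem \ref{ThmMain} by bounding $\log R_n$ in terms of $P_n$ alone. All prime factors of $n^2+1$ are at most $P_n$, and each odd one is $\equiv 1 \pmod 4$. Hence $R_n$ divides the product of all primes up to $P_n$, and by Chebyshev's bound $\theta(x)=\sum_{p\le x}\log p\ll x$,
$$
\log R_n \le \sum_{p\le P_n}\log p \ll P_n .
$$
Theorem \ref{ThmMain} then gives
$$
(\log_2 n)^2 \ll P_n \log_2 P_n .
$$
This is the only input we need. It is also the only nontrivial step, and it is already the content of the theorem.

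It remains to invert this inequality. Set $x=P_n$ and $y=(\log_2 n)^2$, so that $x\log_2 x\ge c\,y$ for some absolute $c>0$ and all large $n$. Since $P_n\to\infty$ (already known from the Chowla--Mahler bound), we may take $x$ large.

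We first get a crude bound. From \eqref{EqnPasten}, or directly from the inequality itself, $x\ge y^{1/2}$ for large $n$. Also $x\le n^2+1$, so $\log_2 x\le \log_2 n+O(1)$. Since $y=(\log_2 n)^2$, this gives
$$
\log_2 x \le \log_2(n^2+1) = \log\bigl(\log(n^2+1)\bigr) = \log\bigl(2\log n + O(1/n^2)\bigr) = \log_2 n + O(1),
$$
which is too weak on its own, so we refine. From $x\ge y^{1/2}$ we get $\log x\ge \tfrac12\log y$. For an upper bound on $\log_2 x$, suppose $x\ge y$. Then the corollary's bound $x\gg y/\log_4 n$ holds trivially. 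Otherwise $x<y$, so $\log_2 x<\log_2 y=\log\bigl(2\log_3 n\bigr)=\log_4 n+O(1)$. In that case $x\ge c\,y/\log_2 x\gg y/\log_4 n$.

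Combining the two cases yields $P_n\gg (\log_2 n)^2/\log_4 n$, as claimed. The argument has no real obstacle beyond the Chebyshev estimate and the case split in the inversion.
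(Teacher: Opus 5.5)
Your proof is correct and follows the paper's route: Chebyshev gives $\log R_n \ll P_n$, Theorem \ref{ThmMain} then yields $(\log_2 n)^2 \ll P_n\log_2 P_n$, and this inequality is inverted. Your case split is a clean way to carry out the inversion the paper leaves implicit: $P_n \ge (\log_2 n)^2$ is trivial, while $P_n < (\log_2 n)^2$ forces $\log_2 P_n \le \log_4 n + O(1)$. The ``crude bound'' paragraph and the remark about primes $\equiv 1 \pmod 4$ are never used and can be dropped.
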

This result improves \eqref{EqnPasten} by a factor of $(\log_3 n)/\log_4n$.

To deduce Corollary \ref{CoroP} from Theorem \ref{ThmMain} one first notes that Chebyshev's bound for primes gives
$$
\log R_n \le \sum_{p\le P_n} \log p \ll P_n
$$
from where one deduces
$$
(\log_2 n)^2 \ll P_n\log_2(P_n),
$$
and finally one inverts to extract $P_n$.

One might ask what would happen in the hypothetical scenario in which the bound of Corollary \ref{CoroP} is (nearly) sharp. It turns out that in that case the same bound would hold for many prime factors of $n^2+1$.
\begin{corollary}\label{CoroMany} There is an absolute constant $\kappa>0$ such that for all $n\gg 1$ we have
$$
\#\left\{p\, |\, n^2+1 : p > \kappa \frac{(\log_2 n)^2}{\log_2 P_n}\right\} \gg \frac{(\log_2 n)^2}{(\log P_n) \log_2 P_n}.
$$ 
In particular, if an integer $n$ satisfies $P_n \le A (\log_2 n)^B$ for fixed constants $A>0$ and $B\ge 2$, then there are at least
$$
\gg_{A,B} \frac{(\log_2 n)^2}{(\log_3 n)\log_4 n}
$$
prime divisors $p$ of $n^2+1$ satisfying
$$
p \gg_{A,B} \frac{(\log_2 n)^2}{\log_4 n}.
$$
\end{corollary}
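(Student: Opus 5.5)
We deduce Corollary \ref{CoroMany} from Theorem \ref{ThmMain} by splitting $\log R_n$ according to the size of the prime. Write $L=\log_2 n$ and let $c>0$ be a constant with $c L^2\le \log(R_n)\log_2(P_n)$ for $n\gg1$, as given by Theorem \ref{ThmMain}. Then
$$
\log R_n \ge \frac{c L^2}{\log_2 P_n}.
$$
Fix a threshold $T=\kappa L^2/\log_2 P_n$, where $\kappa>0$ is small and chosen below, and write
$$
\log R_n=\sum_{p\mid n^2+1,\ p\le T}\log p+\sum_{p\mid n^2+1,\ p>T}\log p .
$$
By Chebyshev's bound, the first sum is at most $\sum_{p\le T}\log p\le C_0 T$ for an absolute constant $C_0$. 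Taking $\kappa=c/(2C_0)$ makes the first sum at most $\tfrac12 cL^2/\log_2 P_n$. Hence
$$
\sum_{p\mid n^2+1,\ p>T}\log p\ge \frac{cL^2}{2\log_2 P_n}.
$$

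Each prime in the second sum satisfies $\log p\le \log P_n$. Therefore the number $N$ of primes $p\mid n^2+1$ with $p>T$ satisfies
$$
N\ge \frac{cL^2}{2(\log P_n)\log_2 P_n}.
$$
This is the first assertion. One caveat: $\log_2 P_n$ must be positive and the threshold meaningful. This holds because Corollary \ref{CoroP} gives $P_n\to\infty$.

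For the second assertion, suppose $P_n\le A L^B$. We also need a lower bound $\log P_n\gg \log L$. Corollary \ref{CoroP} gives $P_n\gg L^2/\log_4 n\ge L$ for large $n$, so $\log P_n\ge \log L=\log_3 n$ up to an additive constant. Combined with the upper bound, this gives
$$
\log P_n\le B\log_3 n+O_A(1)\ll_{A,B}\log_3 n.
$$
Taking one more logarithm, both bounds give $\log_2 P_n\asymp_{A,B}\log_4 n$ for $n$ large. Substituting these into the first assertion shows that the count is $\gg_{A,B} L^2/((\log_3 n)\log_4 n)$. It also shows that the threshold $\kappa L^2/\log_2 P_n$ is $\gg_{A,B} L^2/\log_4 n$.

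All the depth lies in Theorem \ref{ThmMain}; this deduction is elementary. The only step needing care is the comparison $\log_2P_n\asymp\log_4 n$, which needs both the hypothesis $P_n\le AL^B$ and the lower bound from Corollary \ref{CoroP}.
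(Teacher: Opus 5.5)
Your proposal is correct and follows the paper's proof: split $\log R_n$ at $T\asymp(\log_2 n)^2/\log_2 P_n$, bound the small primes by Chebyshev, bound each remaining prime's contribution by $\log P_n$, and choose the constant in $T$ so the Chebyshev term uses up only a fixed fraction of the lower bound from Theorem \ref{ThmMain} (you take $\kappa=c/(2C_0)$; the paper takes $T=(c/3)(\log_2 n)^2/\log_2 P_n$ with $C_0=2$). Your treatment of the ``in particular'' clause is also fine, though there only the upper bounds $\log P_n\ll_{A,B}\log_3 n$ and $\log_2 P_n\ll_{A,B}\log_4 n$ are used, so the lower bound on $P_n$ matters only to guarantee $\log_2 P_n>0$, not for the comparison itself.
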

The deduction of this corollary from Theorem \ref{ThmMain} is presented in Section \ref{SecProofs}.

The results in this note are discussed for the polynomial $n^2+1$ for historical reasons and because in this case the arguments are cleaner than for other polynomials (essentially, because $\Z[i]$ is a UFD). But the main ideas of Section \ref{SecProofs} can be reused in the context of the arguments of \cite{CuevasPasten} to get similar results for all irreducible quadratic polynomials and for cubic polynomials of the form $ax^3+b$. We do not pursue that direction here because the adaptation, after the methods of \cite{CuevasPasten} and the ideas presented here, is mostly routine.

The use of AI is described in the Acknowledgements section.


\section{Proofs}\label{SecProofs}

\begin{proof}[Proof of Theorem \ref{ThmMain}]
The proof follows the ideas from \cite{Pasten}.

Factor
$$
 n+i=u\prod_{j=1}^{r}\gamma_j^{e_j}
$$
in $\mathbb Z[i]$, with the $\gamma_j$ pairwise non-associated Gaussian
primes. For $B>2$, let
$$
 I=\{j:e_j>B\},\qquad m=1+\#I,
$$
and let $p_j$ be the rational prime below $\gamma_j$. 

Let $\gamma_0$ be the part of the factorization of $n+i$ coming from the primes $\gamma_j$ with $j\notin I$. Let $w=\bar{u}/u$ and $\xi_j = \bar{\gamma}_j/\gamma_j$ where the bar denotes complex conjugation. Then 
$$
\xi := \frac{n-i}{n+i} = w\xi_0\prod_{j\in I} \xi_j^{e_j}
$$
belongs to the multiplicative subgroup of $\Q(i)^\times$ generated by the $\xi_j$ modulo torsion.

We observe that
$$
1-\xi = 1-\frac{n-i}{n+i} = \frac{2i}{n+i}.
$$
From Theorem 4.2.1 in \cite{EvertseGyory} (a consequence of the theory of linear forms in logarithms) we obtain:
$$
\log\frac{|n+i|}{2} = - \log\left|1-\xi\right| \le C^m \left(\log\max\{2 , h(\xi)\}\right) \max\{1,h(\xi_0)\}\prod_{j\in I} h(\xi_j)
$$
for an absolute constant $C>1$. Since $h(\xi) \ll \log n$, and we also have $h(\xi_0)\ll B\log R_n$ and $h(\xi_j)\ll \log p_j$, we deduce
$$
\sqrt{\log n}\le \frac{\log n}{\log_2 n} \le K^mB(\log R_n)\prod_{j\in I} \log p_j \le B(\log R_n)(K \log P_n)^m
$$
for an absolute constant $K>1$ and $n\gg 1$. We will bound $m$ in terms of the choice of $B$.

Lemma 3.2 from \cite{Pasten} (which comes from the theory of Shimura curves of \cite{PastenShimuraCurves}) gives
$$
(B/2)^{m-1}\le  \prod_{p|n^2+1} v_p(n^2+1) \ll R_n^8
$$
where $v_p(N)$ is the exponent of $p$ in the prime factorization of $N$. (The bound has $B/2$ instead of $B$ because the exponents $e_j$ come from the factorization of $n+i$ in $\Z[i]$, not $n^2+1$ in $\Z$.)

Hence
$$
m \ll 1+ \frac{\log R_n}{\log B}.
$$
Choosing $B = \exp(\sqrt{(\log R_n)\log_2 P_n})$ we get
$$
m \ll  \sqrt{(\log R_n)/\log_2 P_n}
$$
hence
$$
\log_2 n \ll \log B + \log_2 R_n + m\log_2 P_n \ll \sqrt{(\log R_n)\log_2 P_n} + \log_2 R_n + \sqrt{(\log R_n)\log_2 P_n}
$$
and the result follows.
\end{proof}


\begin{proof}[Proof of Corollary \ref{CoroMany}] Let $T$ be a parameter to be chosen later. Let $D_n(T)$ be the number of primes $p>T$ that divide $n^2+1$. Then
$$
\log R_n \le \sum_{p\le T} \log p + \sum_{\substack{p>T\\ p|n^2+1}} \log p \le 2 T + D_n(T)\log P_n
$$
by Chebyshev's bound. Theorem \ref{ThmMain} gives
$$
\log R_n \ge c\cdot \frac{(\log_2 n)^2}{\log_2P_n}
$$
as $n$ grows, for a suitable absolute constant $c>0$. Hence
$$
2 T + D_n(T)\log P_n \ge c\cdot\frac{(\log_2 n)^2}{\log_2P_n}
$$
and we deduce
$$
D_n(T) \ge c\cdot \frac{(\log_2 n)^2}{(\log P_n)\log_2P_n} - \frac{2T}{\log P_n}.
$$
The result follows by choosing 
$$
T = (c/3) \frac{(\log_2 n)^2}{\log_2P_n}
$$
provided that $n\gg 1$.
\end{proof}

\section{Acknowledgements}

Supported by ANID Fondecyt Regular grant 1230507 from Chile. 

Corollary \ref{CoroP} and its initial proof were first obtained in an autonomous way by the AI model ChatGPT-5.6 Sol Pro after a single prompt in July 2026. Theorem \ref{ThmMain} was subsequently discovered by the author from interactions with the same AI model. The proof presented here is due to the author. Claude Opus 5 was used for proofreading an earlier version of the manuscript. The author takes full responsibility for the content of the paper.



\begin{thebibliography}{9}         

\bibitem{Chowla} S. Chowla, \emph{The greatest prime factor of $x^2+1$}. J. London Math. Soc. 10 (1935), 117--120.

\bibitem{CuevasPasten} J. Cuevas Barrientos, H. Pasten, \emph{On the greatest prime factor of polynomial values and subexponential Szpiro in families}. Preprint (2025) arXiv:2504.15971


\bibitem{EvertseGyory} J.-H. Evertse, K. Gy\"ory, \emph{Unit Equations in Diophantine Number Theory}. Cambridge Studies in Advanced Mathematics, Cambridge University Press, 2016. 

\bibitem{Haristoy} J. Haristoy, \emph{\'Equations diophantiennes exponentielles}. PhD, University of Strasbourg I (Louis Pasteur), 2003.

\bibitem{Mahler} K. Mahler, \emph{\"Uber den gr\"ossten Primteiler der Polynome $x^2\mp 1$}. Archiv Math. og Naturv. 41 (1933), 3--8.

\bibitem{PastenShimuraCurves} H. Pasten, \emph{Shimura curves and the abc conjecture}. J. Number Theory 254 (2024), 214--335.

\bibitem{Pasten} H. Pasten, \emph{The largest prime factor of $n^2+1$ and improvements on subexponential $ABC$}. Invent. Math. 236 (2024), 373--385.

\bibitem{StewartYu} C. L. Stewart, K. Yu, \emph{On the abc conjecture II}. Duke Math. J. 108 (2001), 169--181.

\bibitem{Stormer1} C. St\"ormer, \emph{Quelques th\'eor\`emes sur l'\'equation de Pell $x^2 - Dy^2 = \pm 1$ et leurs applications}. Skrifter Videnskabsselskabet (Christiania), Mat.-Naturv. Kl. I (1897).

\bibitem{Stormer2} C. St\"ormer, \emph{Sur une  \'equation indetermin\'ee}. C. R. Acad. Sci. Paris 127 (1898), 752--754.

\end{thebibliography}
\end{document}